%
\documentclass[runningheads]{llncs}
\usepackage[T1]{fontenc}
%
\usepackage{graphicx, amsmath, hyperref}
%
\usepackage{color}

\urlstyle{rm}

\spnewtheorem*{subclaim}{Subclaim}{\itshape}{\itshape}

\begin{document}
\title{Location-Domination Type Problems Under the Mycielski Construction\thanks{This work was sponsored by a public grant overseen by the French National Research Agency as part of the “Investissements d’Avenir” through the IMobS3 Laboratory of Excellence (ANR-10-LABX-0016) and the IDEX-ISITE initiative CAP 20-25 (ANR-16-IDEX-0001).}}
%
%
\author{Silvia M. Bianchi\inst{1}
\and
Dipayan Chakraborty\inst{2,3}\orcidID{0000-0001-7169-7288}
\and
Yanina Lucarini\inst{1}
\and
Annegret K. Wagler\inst{2}}
\authorrunning{S. Bianchi et al.}
%
\institute{Deprtamento de Matem\'{a}tica, Universidad Nacional de Rosario,\\
Av. Pellegrini 250, S2000BTP Rosario, Argentina\\
\email{\{sbianchi, lucarini\}@fceia.unr.edu.ar}
\and
Université Clermont-Auvergne, CNRS, Mines de Saint-Étienne, Clermont-Auvergne-INP, LIMOS, 63000 Clermont-Ferrand, France\\
\email{\{dipayan.chakraborty, annegret.wagler\}@uca.fr}
\and
Department of Pure and Applied Mathematics, University of Johannesburg, Auckland Park, 2006, South Africa}
\maketitle              
\begin{abstract}
We consider the following variants of the classical minimum dominating set problem in graphs: locating-dominating set, locating total-dominating set and open locating-dominating set. All these problems are known to be hard for general graphs. A typical line of attack, therefore, is to either determine the minimum cardinalities of such sets in general or to establish bounds on these minimum cardinalities in special graph classes. In this paper, we study the minimum cardinalities of these variants of the dominating set under a graph operation defined by Mycielski in~\cite{Mycielski1955} and is called the Mycielski construction. 
We provide some general lower and upper bounds on the minimum sizes of the studied sets under the Mycielski construction. We apply the Mycielski construction to stars, paths and cycles in particular, and provide lower and upper bounds on the minimum cardinalities of such sets in these graph classes. Our results either improve or attain the general known upper bounds. 

\keywords{Locating-dominating set \and Open locating-dominating set \and Locating total-dominating set \and Mycielski construction.}
\end{abstract}
\section{Introduction}

For a graph modeling a facility, the placement of monitoring devices, for example, fire detectors or surveillance cameras, motivates the study of various location-domination type problems in graphs. 
The problem of placing monitoring devices so that every site of a facility is visible from a monitor leads to a domination problem. In addition, the position of a fire, a thief, or a saboteur in the facility can be uniquely located by a specific subset of the monitoring 
devices which leads to location problems. 
During the last decades, several combined location-domination problems of this type have been actively studied, see, for example, the bibliography maintained by Lobstein and Jean~\cite{Lobstein_Lib}. 
In this work, we study three 
different location-domination type problems under a graph operation known as the Mycielski construction and defined by Mycielski himself in \cite{Mycielski1955}.

All graphs in this paper are finite, simple and connected. Given a graph $G=(V,E)$, the (open) neighborhood of a vertex $u\in V$ is the set $N(u) = N_G(u)$ of all vertices of $G$ adjacent to $u$, and $N[u] =N_G[u] = \{u\} \cup N(u)$ is the closed neighborhood of $u$. A subset $C \subseteq V$ is \emph{dominating} (respectively, \emph{total-dominating}) if the set $N[u]\cap C$ (respectively, $N(u)\cap C$) is non-empty for all $u \in V$. In addition, a subset $C \subseteq V$  \emph{separates} (respectively, \emph{total-separates}) a pair $u,v\in V$ if $N[u]\cap C\neq N[v]\cap C$ (respectively, $N(u)\cap C\neq N(v)\cap C$). In such a case, we also say that $u,v\in V$ are \emph{separated by} $C$ (respectively, \emph{total-separated} by $C$). A subset $C \subseteq V$ is called
\begin{itemize}
\item a \emph{locating-dominating set}~\cite{slater1} (or $LD$-set for short) of $G$ if it is a dominating set  of $G$ that separates all pairs of distinct vertices outside of $C$, that is, $N(u)\cap C\neq N(v)\cap C$, for all distinct $u,v\in V-C$;
\item a \emph{locating total-dominating set}~\cite{hhh} (or $LTD$-set for short) of $G$ if it is a total-dominating set of $G$ that separates all pairs of distinct vertices outside of $C$, that is, $N(u)\cap C\neq N(v)\cap C$, for all distinct $u,v\in V-C$;
\item an \emph{open locating-dominating set}~\cite{ss} (or $OLD$-set for short) of $G$ if it is a total-dominating set of $G$ that total-separates all pairs of distinct vertices of the graph, that is, $N(u)\cap C\neq N(v)\cap C$, for all distinct $u,v\in V$.
\end{itemize} 

Two distinct vertices $u,v$ of a graph $G=(V,E)$ are called \emph{false twins} if $N(u)=N(v)$, see \cite{ss}. Similarly, any two vertices $u,v\in V$ with $N[u]=N[v]$ are called \emph{true twins}. Now, for $X \in \{LD,LTD,OLD\}$, the $X$-problem on $G$ is the problem of finding an $X$-set of minimum size in $G$. The size of such a set is called the $X$-number of $G$ and is denoted by $\gamma_X(G)$. Note that a graph $G$ without isolated vertices admits an $OLD$-set if there are no false twins in $G$. On the other hand, LD-sets and LTD-sets are admitted by all graphs.

From the definitions themselves, the following relations hold for any graph $G$ admitting any two $X$-sets for $X \in \{LD, LTD, OLD\}$:
\begin{equation}\label{eq1}
\gamma_{LD}(G)\leq \gamma_{LTD}(G)\leq \gamma_{OLD}(G).  
\end{equation}

It has been shown that determining $\gamma_{X}(G)$ is in general NP-hard for all $X\in \{
LD,LTD,OLD\}$.  
Apart from determining $\gamma_{LD}(G)$ being NP-hard in general~\cite{ss}, it remains so for bipartite graphs \cite{CHL_2003} and some subclasses of chordal graphs like split graphs and interval graphs~\cite{F_2017}. This result is also extended to planar bipartite unit disk graphs in \cite{MS_2009} and intersection graphs in \cite{F_2015}. Closed formulas for the exact values of $\gamma_{LD}(G)$ have so far been found for restricted graph families, for example, for paths \cite{slater1}, cycles \cite{BCHL_2004}, stars, complete multipartite graphs, some subclasses of split graphs and thin suns \cite{ABLW_2022,ABW_2015}. Bounds for the $LD$-number of trees were provided in \cite{BCMMS_2007}. A linear-time algorithm to determine $\gamma_{LD}(G)$ for $G$ being a tree was provided by Slater in \cite{slater2} and has been extended to block graphs (graphs which generalize the concept of trees in that any 2-connected subgraph in a block graph is complete) in~\cite{ABLW_2020}. Moreover, in connection to block graphs and hence, trees, tight upper and lower bounds for $LD$-numbers of block graphs and twin-free block graphs have been established in~\cite{CFPW_2023}.

Determining $\gamma_{OLD}(G)$ is NP-hard not only in general~\cite{ss} but also on other graph classes like perfect elimination bipartite graphs~\cite{P_2015}, interval graphs~\cite{F_2017} and is APX-hard on chordal graphs of maximum degree 4~\cite{P_2015}. Closed formulas for the exact value of $\gamma_{OLD}(G)$ have so far been found only for restricted graph families such as cliques and paths \cite{ss}, some subclasses of split graphs and thin suns \cite{ABLW_2022}. Tight lower and upper bounds for $OLD$-numbers certain classes of graphs like trees~\cite{ss}, block graphs~\cite{CFPW_2023}, lower bounds for interval graphs, permutation graphs and cographs~\cite{F_2015} and upper bounds for cubic graphs~\cite{HY_2014} have been established. Lastly, some algorithmic aspects of the problem have been discussed in \cite{ABLW_2020,P_2015}. 

Concerning $LTD$-sets, it can be checked that it is as hard as the $OLD$-problem by using the same arguments as in \cite{ss}. Bounds for the $LTD$-number of trees are given in \cite{hhh,LTD1}. In addition, the $LTD$-number in special families of graphs, including cubic graphs, grid graphs, complete multipartite graphs, some subclasses of split graphs and thin suns is investigated in \cite{ABLW_2022,LTD1}.

In fact, giving bounds for the $X$-numbers in special graphs is a popular way to tackle the problems. 
In this work, we study the behavior of the three $X$-sets of graphs under the following graph operation defined by Mycielski in~\cite{Mycielski1955}. Given a graph $G=(V,E)$ with $V=\{v_1,\dots,v_n\}$, a new graph $M(G)$ is constructed as follows: for every vertex $v_i$ of $G$, add a new vertex $u_i$ and make $u_i$ adjacent to all vertices in $N_G(v_i)$. Finally add a vertex $u$ which is adjacent to all $u_i$. Let the set containing all the vertices $u_i$'s be called $U$, that is, $U = \{u_1, u_2, \ldots , u_n\}$.

Originally, Mycielski introduced this construction in the context of graph coloring and used it to generate graphs $M(G)$ whose chromatic number increases by one compared to the chromatic number of $G$. In \cite{Mycie}, it is proved that the application of the Mycielski construction also increases the dominating number by one. 
In this paper, we show that the same holds for total domination and study the $X$-numbers of the graphs $M(G)$, where $G$ is a star $K_{1,n}$, a path $P_n$ and a cycle $C_n$ (see Fig. \ref{fig_M(star)}, Fig. \ref{fig_M(path)} and Fig. \ref{fig_M(cycle)}, respectively, for examples of their illustrations). As far as previous works on such variants of the dominating sets of Mycielski constructions is concerned, we know of only one such, namely, in~\cite{SVM_2022} where the authors find tight upper bounds of $ID$-numbers of $M(G)$ for $G$ being an identifiable graph (that is, a graph without true twins). The $ID$-number of an identifiable graph $G$ is the minimum cardinality of a dominating set $C$ of $G$ such that $N[u] \cap C \neq N[v] \cap C$ for all distinct pairs $u,v$ of vertices of $G$ (see~\cite{kcl}).

\begin{figure}[h]
\begin{center}

\includegraphics[scale=1.2]{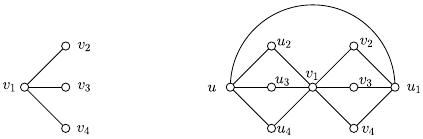}
\caption{The star $K_{1,3}$ and the resulting graph $M(K_{1,3})$}
\label{fig_M(star)}
\end{center}
\end{figure}

\begin{figure}[h]
\begin{center}
\includegraphics[scale=1.1]{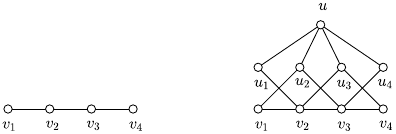}
\caption{The path $P_4$ and the resulting graph $M(P_4)$}
\label{fig_M(path)}
\end{center}
\end{figure}

\begin{figure}[h]
\begin{center}
\includegraphics[scale=0.9]{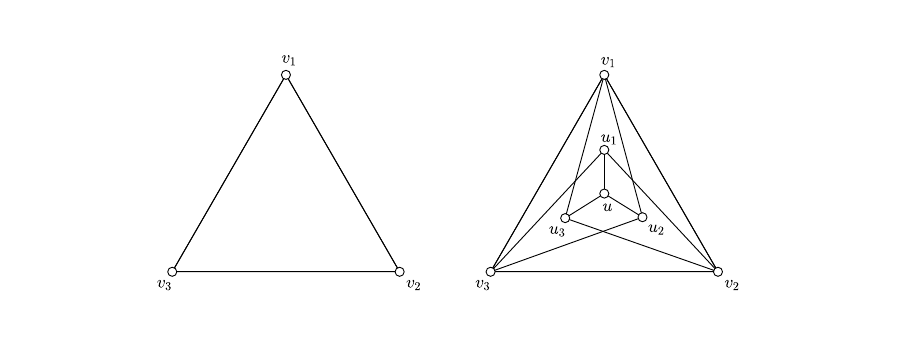}
\caption{The cycle $C_3$ and the resulting graph $M(C_3)$}
\label{fig_M(cycle)}
\end{center}
\end{figure}

In Section \ref{sec_lower-bounds}, 
we show that the application of the Mycielski construction increases the total-dominating number by at least one and give a general lower bound on the studied $X$-numbers of the graphs $M(G)$ in terms of $\gamma_{X}(G)$ when $G$ is either a path or a cycle. We then combine this bound with previously known results on $\gamma_{X}(P_n)$ (respectively, on $\gamma_{X}(C_n)$) to obtain lower bounds on the $X$-numbers of $M(P_n)$ (respectively, of $M(C_n)$). 

In Section \ref{sec_upper-bounds}, we give a general upper bound on the $X$-numbers of the graphs $M(G)$. We also show that this bound is attained when $G$ is a star and improve the bound for the cases when $G$ is a path or cycle.  

We note that there are some particularities in applying the Mycielski construction to paths and cycles with a small number of vertices. In fact, we have $M(P_2)=C_5$. While we have $\gamma_{LD}(P_2)=1$ and $\gamma_{LTD}(P_2)=\gamma_{OLD}(P_2)=2$, it is easy to see that $\gamma_{LD}(C_5)=2$, $\gamma_{LTD}(C_5)=3$, and $\gamma_{OLD}(C_5)=4$ hold. Moreover, $P_3$ and $C_4$ have false twins, and so do $M(P_3)$ and $M(C_4)$. Hence, there exist no $OLD$-sets of these graphs. However, we have $\gamma_{X}(P_3) = \gamma_{X}(C_4) = 2$ and $\gamma_{X}(M(P_3)) = \gamma_{X}(M(C_4)) = 4$ for $X\in \{LD,LTD\}$. Hence, in the rest of what follows, we study paths $P_n$ and cycles $C_n$ with larger values of $n$.

We close with some concluding remarks and open problems for future research.

\section{Lower bounds on $X$-numbers of graphs $M(G)$}
\label{sec_lower-bounds}

To start with, observe the following fact that, for every graph $G$,
\begin{enumerate}
\item two vertices $v_i$ and $v_j$ are false twins in $G$ if and only if the vertices $v_i,v_j$ and $u_i,u_j$ are pairs of false twins in  $M(G)$; and
\item $M(G)$ has no true twins.
\end{enumerate}

In \cite{Mycie}, it is proved that, for every graph $G$, the equality $\gamma(M(G))=\gamma(G)+1$ holds, where $\gamma(G)$ is the dominating number of $G$. 
Analogously, for the total-dominating number $\gamma_t(M(G))$, we can prove:

\begin{lemma}\label{lem_lowerBound}
For every graph $G$ without isolated vertices, we have $\gamma_{t} (M(G)) = \gamma_{t} (G)+1$. 
\end{lemma}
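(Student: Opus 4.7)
The plan is to prove the two inequalities $\gamma_t(M(G)) \leq \gamma_t(G) + 1$ and $\gamma_t(M(G)) \geq \gamma_t(G) + 1$ separately.

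For the upper bound, I would start from a minimum total-dominating set $D$ of $G$, pick any $v_i \in D$, and claim that $S := D \cup \{u_i\}$ is a total-dominating set of $M(G)$. The verification divides over the three vertex types of $M(G)$: an original vertex $v_k$ already has a $G$-neighbor in $D\subseteq S$; a shadow vertex $u_k$ satisfies $N_{M(G)}(u_k) = N_G(v_k) \cup \{u\}$, so the very same $D$-neighbor of $v_k$ in $G$ dominates $u_k$ in $M(G)$; and the apex $u$ is dominated by $u_i \in S$. Since $|S| = |D|+1$, this yields $\gamma_t(M(G)) \leq \gamma_t(G)+1$.

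For the lower bound, I would take an arbitrary total-dominating set $S$ of $M(G)$ and write $S_V := S \cap V(G)$ and $S_U := S \cap U$. The natural projection $D := S_V \cup \{v_i : u_i \in S_U\}$ lives in $V(G)$, and a short check shows $D$ total-dominates $G$: for each $v_k$, whatever neighbor witnesses the total-domination of $v_k$ in $M(G)$ is either some $v_j \in S_V$ (already in $D$) or some $u_j \in S_U$ with $v_j \in N_G(v_k)$ (in which case $v_j \in D$). I would then split on whether $u \in S$. If $u \in S$, then $|S| = |S_V| + |S_U| + 1 \geq |D| + 1 \geq \gamma_t(G) + 1$, and we are done. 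If $u \notin S$, total-domination of the apex $u$ forces $|S_U| \geq 1$, while the total-domination of each $u_k$ — whose $M(G)$-neighborhood is $N_G(v_k) \cup \{u\}$ — now forces $N_G(v_k) \cap S_V \neq \emptyset$ for every $k$. Thus $S_V$ itself is a total-dominating set of $G$, giving $|S_V| \geq \gamma_t(G)$ and hence $|S| \geq \gamma_t(G) + 1$.

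I expect the main obstacle to be precisely this second case of the lower bound: when $u \notin S$, the vertices of $S_U$ seem to contribute nontrivially to total-dominating the copy of $V(G)$ in $M(G)$, and it is not immediately obvious that we get the full $\gamma_t(G)+1$ bound rather than just $\gamma_t(G)$. The key observation that resolves it is that removing $u$ from $S$ leaves each $u_k$ with only $V(G)$-type neighbors available, which upgrades $S_V$ on its own into a total-dominating set of $G$ and makes the apex-covering element of $S_U$ genuinely extra. The hypothesis that $G$ has no isolated vertices is used implicitly at the start, since it is what guarantees $\gamma_t(G)$ is well-defined.
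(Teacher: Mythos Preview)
Your proof is correct and follows essentially the same route as the paper's own proof sketch: the upper bound via $D\cup\{u_i\}$, and the lower bound by projecting $S$ onto $V(G)$ and splitting on whether $u\in S$, with the key observation in the $u\notin S$ case that total-domination of each $u_k$ forces $S_V$ alone to total-dominate $G$. The only cosmetic difference is that the paper picks an arbitrary $u_i\in U$ for the upper bound while you pick one with $v_i\in D$; either choice works.
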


\begin{proof}[sketch]
Let $C\subseteq V$ be a total-dominating set of $G$ and let $u_i\in   U$. We define $C_i=C\cup \{u_i\}$. As $G$ has no isolated vertices, every vertex in $V(M(G))$ is adjacent to a vertex in $C_i$ and so, $\gamma_{t} (M(G))\leq \gamma_{t} (G)+1$. 

Now, let $C_M$ be a total-dominating set of $M(G)$ of cardinality $\gamma_{t} (M(G))$. Every vertex in  $V$ is adjacent to a vertex in $C_M$. Let us define the sets $C_V= C_M \cap V$ and $C_U=C_M \cap U$. Then it can be verified that the set $C_V\cup \{v_i:u_i\in C_U\}$ is a total-dominating set of $G$ and $|C_V| \leq |C_M - \{u\}|$. Thus, if $u \in C_M$, we are done. Therefore, let us assume that $u \notin C_M$. Then, there exists $u_j \in C_M$ in order for $C_M$ to total-dominate $u$. Now, for every vertex $u_i \in C_U$, any neighbor $v_k ~(\in V)$ of $u_i$ also has a neighbor in $C_V$ (the same vertex in $C_V$ that is a neighbor of $u_k$). This implies tat $C_M - C_U$ is a total-dominating set of $V$. Since, $u_j \in C_U$, we have $|C_U| \geq 1$ and hence, the result follows. \qed
\end{proof}


This motivates us to study the parameter $\gamma_X(M(G))$ in terms of $\gamma_X(G)$. In doing so, we now establish a general lower bound on the $X$-numbers of the graphs $M(G)$, where $G$ is either a path $P_n$ or a cycle $C_n$ and $X \in \{LD, LTD, OLD\}$. 

\begin{theorem}\label{thm_general-lb}
Let $X\in \{LD,LTD,OLD\}$. For a graph $G$ that is either a path $P_n$ or a cycle $C_n$ admitting an $X$-set, we have 
\[
\gamma_{X}(M(G))\geq \gamma_{X}(G)+1.
\]
\end{theorem}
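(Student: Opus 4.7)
The plan is to associate to every $X$-set $C_M$ of $M(G)$ an $X$-set $C$ of $G$ with $|C|\le |C_M|-1$. Writing $C_V:=C_M\cap V$ and $C_U:=C_M\cap U$, I will set
\[
C := C_V \cup \{v_j : u_j\in C_U\}\subseteq V
\]
and then verify that (i) $C$ is an $X$-set of $G$, and (ii) $|C|\le|C_M|-1$.

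For (i), the (total-)domination part follows by the same idea as in Lemma~\ref{lem_lowerBound}: using $N_{M(G)}(u_\ell)\cap V = N_G(v_\ell)$, any vertex of $C_M$ dominating $u_\ell$ through $V$ pushes down to a vertex of $C_V\subseteq C$ dominating $v_\ell$. For the separation property, the key observation is that the ``projection'' $\pi\colon V\cup U\to V$ sending $v_k,u_k\mapsto v_k$ satisfies $N_G(v_i)\cap C = \pi(N_{M(G)}(v_i)\cap C_M)$ for every $v_i\in V$; and because for each index $k$ we have $v_k\in N_{M(G)}(v_i)\iff u_k\in N_{M(G)}(v_i)\iff v_k\in N_G(v_i)$, the map $\pi$ preserves distinctness of these neighborhoods on all pairs from $V\setminus C\subseteq V\setminus C_M$, yielding the desired (total-)separation (for OLD, an analogous check applies to pairs involving vertices of $C$).

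For (ii), a direct count gives
\[
|C_M|-|C| \;=\; [u\in C_M] \;+\; \bigl|\{i : v_i\in C_V \text{ and } u_i\in C_U\}\bigr|,
\]
so $|C|\le|C_M|-1$ as soon as $u\in C_M$ or some index is ``overlapping''.

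The main obstacle will be the residual case $u\notin C_M$ with no overlapping index, in which naively $|C|=|C_M|$. I would rule this out by a structural argument using the specific nature of the host graph. For a path $P_n$, focusing on a leaf $v_1$: if both $v_1,u_1\notin C_M$, then separation of the pair $(v_1,u_1)$ in $M(G)$ forces $u_2\in C_U$, while domination of $u_1$ (using $u\notin C_M$) forces $v_2\in C_V$; this produces an overlap at index~$2$, a contradiction. Hence $v_1\in C$, and symmetrically $v_n\in C$, after which a short case check on the $(C_V,C_U)$-decompositions that can give rise to such a $C$ rules out $|C_M|=\gamma_X(P_n)$. For a cycle $C_n$, I expect an analogous local analysis at any index $i$ with $v_i,u_i\notin C_M$ (forcing $u_{i-1}\in C_U$ or $u_{i+1}\in C_U$) to give chained constraints on $C_U$ that are incompatible with $C_M$ being of minimum size.
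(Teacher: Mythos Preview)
Your reduction is identical to the paper's: define $C=C_V\cup\{v_j:u_j\in C_U\}$, show it is an $X$-set of $G$, and use the count $|C_M|-|C|=[u\in C_M]+|\{i:v_i\in C_V,\ u_i\in C_U\}|$ to finish whenever $u\in C_M$ or an overlap occurs. The projection argument for separation is fine and is essentially the paper's Claim.

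The gap is the residual case $u\notin C_M$ with no overlaps. Your leaf analysis for $P_n$ correctly forces an overlap at index~$2$ when $v_1,u_1\notin C_M$, hence $v_1\in C$ (and symmetrically $v_n\in C$). But from here, the promised ``short case check on the $(C_V,C_U)$-decompositions'' is not short and is not carried out: there are, for general $n$, many minimum $X$-sets of $P_n$ that \emph{do} contain $v_1$ and $v_n$, and for each such $C$ there are $2^{|C|}$ candidate liftings to $C_M$. For cycles you give no argument at all beyond ``I expect''. So as written the proof stops precisely where the substantive combinatorics begins.

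The paper closes this case differently. It first records two elementary facts about \emph{minimum} $LD$-sets of paths/cycles: (a) no three consecutive vertices $v_i,v_{i+1},v_{i+2}$ can all lie in $C$ (one can delete $v_{i+1}$); (b) the pattern $v_i,v_{i+1},v_{i+3},v_{i+4}\in C$ is also forbidden (replace $v_{i+1},v_{i+3}$ by $v_{i+2}$). Then, starting from any $u_i\in C_U$ (which must exist to dominate $u$), a short chain of domination/separation constraints on $C_M$ forces one of these forbidden patterns in $C$, contradicting minimality. This forbidden-configuration idea is exactly what is missing from your argument and is what makes the residual case go through uniformly for all $n$ (and for cycles as well as paths). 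If you want to complete your approach, you would need to either import these two reduction rules, or push the leaf chain further to produce the same kind of contradiction.
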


\begin{proof}[sketch]
As a proof sketch, we provide here the proof of the theorem only for the case that $X = LD$. The proof in the other cases when $X \in \{LTD,OLD\}$ follows with similar proof techniques. To begin with, let us assume that $G$ is any graph (not necessarily a path or a cycle) and that $C_M$ is a minimum $LD$-set of $M(G)$. Let $C_V= C_M \cap V$ and $C_U=C_M \cap U$. Then define the set $C = C_V \cup \{v_i : u_i \in C_U\}$.

\medskip

\noindent \textit{Claim.} $C$ is an $LD$-set of $G$.

\medskip

\noindent \textit{Proof of Claim.}
Firstly, since $C_M$ is a dominating set of $M(G)$, one can verify that the set $C$ is also a dominating set of $G$. We now show that $C$ is also a separating set of $G$. Let $v_i, v_j \in V$ be any pair of arbitrary vertices such that $v_i, v_j \notin C$. Then we show that $v_i, v_j$ are separated by $C$ in $G$. Let $w_k \in C_M$ with $k \notin \{i,j\}$ separate $v_i$ and $v_j$ in $M(G)$, where $w_k \in \{ u_k, v_k \}$. If $w_k=v_k$, then $C$ clearly separates $v_i, v_j$. So, let $w_k = u_k$ and that $u_k$ is a neighbour of $v_i$ and not of $v_j$ in $M(G)$. Now, if $k=j$, then $v_j \in C$ and is trivially separated from every other vertex of $G$ by the definitions of $LD$--sets. So, let $k \neq j$. Then again, $v_k$ is a neighbour of $v_i$ and not of $v_j$ in $G$ and thus $C$ separates $v_i,v_j$. This establishes the claim. \qed

Thus we have,
\begin{flalign}\label{eq_thm-lb}
\gamma_{LD} (M(G)) \geq |C_M - \{u\}| \geq |C| \geq \gamma_{LD}(G).
\end{flalign}
So, if $u \in C_M$, then $\gamma_{LD}(M(G)) > |C_M|$ and hence, the statement of the theorem holds. So, let us assume that $u \notin C_M$, in which case, we have $\gamma_{LD} (M(G)) \geq \gamma_{LD}(G)$. Toward contradiction, let us assume that $\gamma_{LD} (M(G)) = \gamma_{LD}(G)$. Then, by the assumed equalities in (\ref{eq_thm-lb}), $C$ must be a minimum $LD$-set of $G$. This in turn implies that for each $i$, we have $|\{u_i,v_i\} \cap C_M| \leq 1$. In other words, if $u_i \in C_M$, then $v_i \notin C_M$ and vice-versa.

For the rest of the proof sketch, let us assume that $G$ is either a path $P_n$ or a cycle $C_n$. First of all, we observe that if any three consecutive vertices $v_i, v_{i+1}, v_{i+2} \in C$, then $C$ cannot be a minimum $LD$-set of $G$, as one can discard $v_{i+1}$ from $C$ and the latter still remains an $LD$-set. Similarly,  for some $i$, if $v_i, v_{i+1}, v_{i+3}, v_{i+4} \in C$, then again $C$ cannot be a minimum $LD$-set of $G$, as one can discard $v_{i+1}, v_{i+3}$ and include $v_{i+2}$ in $C$ and the latter still remains an $LD$-set of $G$. With those observations, let us first assume that some vertex $u_i \in C_M$ in order to dominate $u$. If one of its neighbours in $G$, say $v_{i+1}$, without loss of generality, belongs to $C_M$, then we must also have $v_{i+2} \in C_M$ in order for $C_M$ to dominate $u_{i+1}$ (note that $v_i \notin C_M$). Thus, $v_i, v_{i+1}, v_{i+2} \in C$, a contradiction to the minimality of $C$ by our earlier observation. Hence, let us assume that for all $u_i \in C_M$, none of its neighbours in $G$, that is, $v_{i-1}$ and $v_{i+1}$, belong to $C_M$. So, fix one such $u_i \in C_M$. Then $v_i \notin C_M$. Therefore, without loss of generality, let $u_{i+1} \in C_M$ in order for the latter to dominate $v_i$. If any of $u_{i-1}, v_{i-1} \in C_M$, then again we would have three consecutive vertices of $G$ in $C$, a contradiction. So, let us assume that $u_{i-1}, v_{i-1} \notin C_M$. In order for $v_{i-1}, v_{i+1}$ to be separated, we must have either $w_{i-2} \in C_M$ or $w_{i+2} \in C_M$, where $w_{i-2} \in \{u_{i-2}, v_{i-2}\}$ and $w_{i+2} \in \{u_{i+2}, v_{i+2}\}$. However, we cannot have $w_{i+2} \in C_M$, as otherwise, we would have $v_i, v_{i+1}, v_{i+2} \in C$, the same contradiction as before. Hence, $w_{i-2} \in C_M$. If $w_{i-2} = v_{i-2}$, then $u_{i-2} \notin C_M$. This implies that $v_{i-3} \in C_M$ for $C_M$ to dominate $u_{i-2}$. This implies that $v_{i-3}, v_{i-2}, v_i, v_{i+1} \in C$, a contradiction by our earlier observation. Moreover, if $w_{i-2} = u_{i-2}$, then $v_{i-2} \notin C$ and hence, $w_{i-3} \in C_M$ for $v_{i-2}$ to be dominated by $C_M$, where $w_{i-3} \in \{u_{i-3}, v_{i-3}\}$. Here again, we have $v_{i-3}, v_{i-2}, v_i, v_{i+1} \in C$, the same contradiction.

This proves that our assumption of $\gamma_{LD}(M(G)) = \gamma_{LD}(G)$ is wrong which, in turn, proves the theorem for the case that $X = LD$. The other cases when $X \in \{LTD, OLD\}$ follow with similar proof techniques. \qed 
\end{proof}

Note that this lower bound in Theorem \ref{thm_general-lb} is tight: 
\begin{itemize}
\item for $X\in \{LD,LTD\}$, we have  
$\gamma_X(C_3)=2$ as $\{v_1,v_2\}$ is a minimum $X$-set, and 
$\gamma_X(M(C_3))=3$ as $\{v_1,v_2,u\}$ is a minimum $X$-set 
(see Figure \ref{fig_M(cycle)} for $C_3$ and $M(C_3)$), 
\item for $X = OLD$, no tight examples are yet known in this case.
\end{itemize}


We deduce lower bounds for $\gamma_X(M(P_n))$ and $\gamma_X(M(C_n))$ from the respective values of $\gamma_X(P_n)$ and $\gamma_X(C_n)$. Theorem \ref{thm_general-lb} together with the results 
from \cite{BCHL_2004} on $\gamma_{LD}(P_n)$ and $\gamma_{LD}(C_n)$ yield:

\begin{corollary}\label{cor_lb_LD}
If $G$ equals $P_n$ or $C_n$ for $n \geq 3$, we have as lower bound:
$$
\gamma_{LD}(M(G)) \geq \left\lceil\frac{2n}{5}\right\rceil +1.
$$
\end{corollary}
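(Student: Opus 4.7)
The plan is to directly combine Theorem \ref{thm_general-lb} with the closed-form values of $\gamma_{LD}(P_n)$ and $\gamma_{LD}(C_n)$ established in \cite{BCHL_2004}. Specifically, for $n \geq 3$ the results of \cite{BCHL_2004} give $\gamma_{LD}(P_n) = \lceil 2n/5 \rceil$, and for cycles (in the range where the construction makes sense) $\gamma_{LD}(C_n) = \lceil 2n/5 \rceil$. Since Theorem \ref{thm_general-lb} already states that
\[
\gamma_{LD}(M(G)) \geq \gamma_{LD}(G) + 1
\]
whenever $G$ is a path $P_n$ or a cycle $C_n$ admitting an $LD$-set, substituting the known values on the right-hand side immediately yields the desired bound.

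Concretely, I would first invoke Theorem \ref{thm_general-lb} to reduce the problem to bounding $\gamma_{LD}(G)$ from below by $\lceil 2n/5 \rceil$. Then I would cite the formulas $\gamma_{LD}(P_n) = \lceil 2n/5 \rceil$ and $\gamma_{LD}(C_n) = \lceil 2n/5 \rceil$ from \cite{BCHL_2004}, noting that these hold in the range $n \geq 3$ under consideration (with the mild caveat that for $n = 3$ one has to check that all relevant $X$-sets exist, which is guaranteed since $P_3$ and $C_3$ do admit $LD$-sets even though $P_3$ does not admit an $OLD$-set). Plugging these values into the inequality from Theorem \ref{thm_general-lb} completes the argument.

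There is essentially no obstacle here: the corollary is a one-line consequence, and the only thing to be careful about is to verify that the small cases $n=3,4$ are covered by the known formulas for $\gamma_{LD}(P_n)$ and $\gamma_{LD}(C_n)$ (they are, since $\lceil 2n/5 \rceil$ is the correct value in that range for $LD$-sets). Thus the proof consists simply of writing
\[
\gamma_{LD}(M(G)) \;\geq\; \gamma_{LD}(G) + 1 \;=\; \left\lceil \frac{2n}{5} \right\rceil + 1,
\]
where the inequality uses Theorem \ref{thm_general-lb} and the equality uses \cite{BCHL_2004}. \qed
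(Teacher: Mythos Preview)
Your proposal is correct and matches the paper's approach exactly: the paper derives the corollary in one line by combining Theorem~\ref{thm_general-lb} with the values $\gamma_{LD}(P_n)=\gamma_{LD}(C_n)=\lceil 2n/5\rceil$ from~\cite{BCHL_2004}.
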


The exact $OLD$-numbers of path and cycles are studied in \cite{ss} and \cite{ABLW_2019}, respectively. However, the latter result for cycles of even order needed to be corrected and as such, we state and prove the result in its entirety as follows.

\begin{theorem} \label{thm_old_cycles}
For any cycle $C_n$ on $n$ vertices such that $n \geq 3$ and $n \neq 4$, 
we have
$$\gamma_{OLD}(C_n) = \left\{
\begin{array}{rll} 
  &\left \lceil\frac{2n}{3} \right\rceil, &\text{for odd } n, \vspace{2pt} \\ 
  2 &\left \lceil\frac{n}{3} \right\rceil, &\text{for even } n.
\end{array}
\right.$$
\end{theorem}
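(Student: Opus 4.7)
My plan is to prove the formula by establishing matching upper and lower bounds, organized by $n \bmod 6$. Throughout, I encode a candidate $OLD$-set $C \subseteq V(C_n)$ as a cyclic $0/1$ sequence (with $1$ marking membership in $C$) and analyze its maximal runs of $1$'s (called $1$-runs) and of $0$'s ($0$-runs), writing their lengths as $s_1,\dots,s_r$ and $t_1,\dots,t_r$, respectively.

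For the lower bound, I would first establish four structural constraints: (a) each $t_i \in \{1,2\}$, since three consecutive non-$C$ vertices violate total-domination; (b) each $s_i \geq 2$, since an isolated $1$ has no neighbour in $C$; (c) $s_i \neq 3$, because the two endpoints of a length-$3$ $1$-run would share the middle vertex as their unique neighbour in $C$, violating total-separation; and (d) if $s_i = 2$ then $t_{i-1} = t_i = 1$, because otherwise a boundary $1$ of this short run and the adjacent $0$-vertex having only one $C$-neighbour collide on a singleton $N(\cdot) \cap C$. Letting $p$ and $q$ count the short (length $2$) and long (length $\geq 4$) $1$-runs, and $\alpha$ and $\beta$ count the $0$-runs of length $1$ and $2$, the identities $p+q = \alpha+\beta$, $n = |C| + \alpha + 2\beta$, $|C| \geq 2p + 4q$, together with $\beta \leq q$ (from (d)) combine to give $|C| \geq 2n/3$, hence $|C| \geq \lceil 2n/3 \rceil$. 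This settles the lower bound for all odd $n$ and for all even $n \not\equiv 4 \pmod{6}$.

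For the upper bound, I would exhibit explicit $OLD$-sets as cyclic concatenations of the bricks $(110)$, $(11110)$, $(111100)$, and $(1111100)$ chosen according to $n \bmod 6$: take $(110)^{n/3}$ when $n \equiv 0 \pmod{3}$; $(11110)(110)^{(n-5)/3}$ when $n \equiv 2 \pmod{6}$; $(11110)^{2}(110)^{(n-10)/3}$ when $n \equiv 4 \pmod{6}$ and $n \geq 10$; $(1111100)(111100)^{(n-7)/6}$ when $n \equiv 1 \pmod{6}$ and $n \geq 7$; and $(11110)(111100)^{(n-5)/6}$ when $n \equiv 5 \pmod{6}$. Each pattern has the claimed cardinality by inspection, each block respects constraints (a)--(d), and verification of the $OLD$-property reduces to checking that the singletons produced by boundary $1$'s and by the length-$1$ endpoints of length-$2$ $0$-runs all point to pairwise distinct $C$-vertices, which is immediate from the chosen cyclic placement.

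The remaining obstacle is the improvement $\gamma_{OLD}(C_n) \geq 2\lceil n/3 \rceil$ for $n = 6k+4$, one more than $\lceil 2n/3 \rceil = 4k+3$. Assuming by contradiction that an $OLD$-set with $|C| = 4k+3$ exists, I set $e = \sum_{i\,:\,s_i \geq 4}(s_i - 4) \geq 0$ and rewrite the density identity as $2q + e = 3|C| - 2n + 2\beta$. Plugging in $3|C| - 2n = 1$ and using $\beta \leq q$ forces $\beta = q$ and $e = 1$. In this tight regime, $\alpha + \beta = p + q$ gives $\alpha = p$, so the inequality $2\alpha \geq 2p$ underlying (d) becomes an equality: every side of a length-$1$ $0$-run is adjacent to a short $1$-run, and every side of a length-$2$ $0$-run is adjacent to a long $1$-run. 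Hence short $1$-runs alternate only with length-$1$ $0$-runs and long $1$-runs alternate only with length-$2$ $0$-runs, so the single cycle cannot interleave both types, forcing $p = 0$ or $q = 0$. If $p = 0$ then $n - |C| = 2q$ gives $2q = 2k+1$, impossible; if $q = 0$ then $e = 0 \neq 1$, also impossible. This contradiction supplies the required $+1$ and closes the proof. The main challenge is precisely this last alternation-and-parity argument, which is what pins down the formula for $n \equiv 4 \pmod 6$.
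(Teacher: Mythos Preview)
Your argument is correct and follows a genuinely different route from the paper's. For the generic lower bound $\lceil 2n/3\rceil$, the paper simply invokes the Seo--Slater regularity bound $\gamma_{OLD}(G)\ge \frac{2n}{1+r}$ for $r$-regular graphs, whereas you obtain it from first principles via the run constraints (a)--(d) and the counting identity. For the delicate case $n=6k+4$, the paper proceeds by induction on $k$: it first proves $\gamma_{OLD}(C_{10})\ge 8$ by hand, then, assuming a hypothetical $OLD$-set of size $4k+3$ in $C_n$, deletes six consecutive vertices (chosen so that the cut falls at a $1$/$0$ boundary), argues that the remaining set is still an $OLD$-set of $C_{n-6}$, and shows that at least four of the six deleted vertices lay in the code, contradicting the inductive hypothesis. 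Your treatment of this case is instead a direct parity-and-alternation argument: from $3|C|-2n=1$ and $\beta\le q$ you squeeze out $\beta=q$, $e=1$, and then observe that the tight incidence count forces long $1$-runs to alternate exclusively with length-$2$ $0$-runs and short $1$-runs exclusively with length-$1$ $0$-runs, which is impossible on a single cycle unless $p=0$ or $q=0$, each of which yields a contradiction. Your approach is entirely self-contained (no appeal to \cite{ss} or \cite{FGRS_2021}) and arguably cleaner for the $6k+4$ improvement; the paper's inductive approach, on the other hand, has the virtue of reusing the base structure and requires less bookkeeping of run types. Your explicit constructions for the upper bound differ in the choice of bricks from the paper's but hit the same cardinalities and are equally valid.
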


\begin{proof}
We prove the theorem by first showing that $\left \lceil\frac{2n}{3} \right\rceil$ for odd $n$ and $2\left \lceil\frac{n}{3} \right\rceil$ for even $n$ is a lower bound on $\gamma_{OLD}(C_n)$ and then providing an $OLD$-set of $C_n$ of exactly the same cardinality as the lower bound. We start with establishing the lower bound first.

Seo and Slater showed in~\cite{ss} that if $G$ is a regular graph on $n$ vertices, of regular-degree~$r$ and with no open twins, then we have $\gamma_{OLD}(G) \geq \frac{2}{1+r}n$. Using this 
result in~\cite{ss} for the cycle $C_n$, therefore, we have $\gamma_{OLD}(C_n) \geq \frac{2}{3}n$, that is, $\gamma_{OLD}(C_n) \geq \big \lceil \frac{2}{3}n \big \rceil$. Now, for $n \neq 6k+4$ for any non-negative integer $k $, we have
$$\bigg \lceil \frac{2n}{3} \bigg \rceil = \left\{
\begin{array}{rll} 
  &\left \lceil\frac{2n}{3} \right\rceil, &\text{for odd } n, \vspace{2pt} \\ 
  2 &\left \lceil\frac{n}{3} \right\rceil, &\text{for even } n.
\end{array}
\right.$$
Thus, the only case left to prove is the following claim.

\begin{claim}
For $n=6k+4$ with $k \geq 1$, we have $\gamma_{OLD}(C_n) \geq 2 \left \lceil\frac{n}{3} \right\rceil = 4k+4$.
\end{claim}

\begin{proof}[of Claim]
The proof of the last claim is by induction on $k$ with the base case being for $k=1$, that is, when $C_n$ is a cycle on $n=10$ vertices. We fisrt show the result for $n=10$.

\begin{subclaim}
$\gamma_{OLD}(C_{10}) \geq 8$.
\end{subclaim}

\begin{proof}[of Subclaim]
Let $V(C_{10}) = \{v_1, v_2, \ldots , v_{10}\}$ and $S$ be a minimum open $OLD$-set of $C_{10}$. Then, $|S| < 10$ by a charaterization result by Foucaud et al.~\cite{FGRS_2021}  on the extremal graphs $G$ for which $\gamma_{OLD}(G) = |V(G)|$. Hence, there exists a vertex $v_1$ (without loss of generality) such that $v_1 \notin S$. We consider the induced $5$-paths $P_1 : v_1v_2v_3v_4v_5$ and $P_2: v_1v_{10}v_9v_8v_7$. Then, from the path $P_1$, the vertex $v_3 \in S$ for the latter to total-dominate $v_2$ and the vertex $v_5 \in S$ for the latter to separate the pair $v_2,v_4$. By the same argument, from path $P_2$, the vertices $v_9,v_7 \in S$. Moreover, at least one vertex from each of the pairs $(v_2,v_4), (v_4,v_6), (v_6,v_8), (v_8,v_{10}), (v_{10},v_2)$ must belong to $S$ for the latter to total-dominate $v_3, v_5, v_7, v_9, v_1$, respectively. Hence, the result follows from counting. \qed
\end{proof}

Thus, the result holds for the base case of the induction hypothesis. We, therefore, assume $k \geq 2$ and that $\gamma_{OLD}(C_m) \geq 4q+4$ for all cycles $C_m$ with $|V(C_m)| = 6q+4$ and $q \in \{1, 2, \ldots , k-1\}$. Toward contradiction, let us assume that $\gamma_{OLD}(C_n) < 4k+4$. Moreover, let $V(C_n) = \{v_1, v_2, \ldots , v_n\}$. Then again, by the charaterization result in~\cite{FGRS_2021}, we have $\gamma_{OLD}(C_n) < n$. This implies that, for any minimum $OLD$-set $S$ of $C_n$, there exists a pair $(v_{n-6},v_{n-5})$ (by a possible renaming of vertices) such that $v_{n-6} \in S$ and $v_{n-5} \notin S$. Let $C'_{n-6}$ be the cycle on $n-6$ vertices formed by adding the edge $v_1v_{n-6}$ in the graph $C_n - \{v_{n-5},v_{n-4}, \ldots, v_n\}$. Note that $|V(C'_{n-6})| = 6(k-1)+4$ and hence, the induction hypothesis applies to it to give
\begin{equation} \label{eq_old}
\gamma_{OLD}(C'_{n-6}) \geq 4(k-1)+4 = 4k.
\end{equation}
Now, let $S' = S - \{v_{n-5},v_{n-4}, \ldots, v_n\}$.

\begin{subclaim}
$S'$ is an $OLD$-set of $C'_{n-6}$.
\end{subclaim}

\begin{proof}[of Subclaim]
To show that $S'$, first of all, is a total-dominating set of $C'_{n-6}$, we notice that the vertices $v_1, v_5 \notin S$. Therefore, all vertices in the set $\{v_2, v_3, \ldots , v_{n-6}\}$ remain total-dominated by $S'$. Moreover, $S'$ total-dominates $v_1$ by virtue of $v_{n-6} \in S'$. This proves that $S'$ is a total-dominating set of $C'_{n-6}$.

We now show that $S'$ is also a total-separating set of $C'_{n-6}$. To that end, since $v_{n-5} \notin S$, if now the vertex $v_n \notin S$ as well, then $S'$ clearly total-separates every pair of vertices in $C'_{n-6}$ and hence, is an $OLD$-set. If however, $v_n \in S$ and total-separates a pair of vertices in $C'_{n-6}$, the pair can either be $(v_1,v_2)$ or $(v_1,v_3)$. Since $n \geq 16$, we have $ 3 < n-6$ and hence, $v_{n-6} \in S'$ total-separates the pairs $(v_1,v_2)$ and $(v_1,v_3)$ in $C'_{n-6}$. Therefore, $S'$ is an $OLD$-set of $C'_{n-6}$. \qed
\end{proof}

\begin{subclaim}
$|S \cap \{v_{n-5},v_{n-4}, \ldots, v_n\}| \geq 4$.
\end{subclaim}

\begin{proof}[of Subclaim]
Since $v_{n-6} \notin S$, it implies that $v_{n-3} \in S$ in order for the latter to total-dominate the vertex $v_{n-4}$. Moreover, we also have $v_{n-1} \in S$ in order for $S$ to total-separate the pair $(v_{n-2}, v_{n-4})$. Furthermore, we must have at least one vertex each from the pairs $(v_{n-4}, v_{n-2})$ and $(v_{n-2}, v_n)$ belonging to $S$ in order for the latter to total-dominate the vertices $v_{n-3}$ and $v_{n-1}$, respectively. Finally, at least one vertex from the pair $(v_{n-4}, v_n)$ must also belong to $S$ in order for $S$ to total-separate the pair $(v_{n-3}, v_{n-1})$. This proves that the result holds. \qed
\end{proof}

Recall that $|S| = \gamma_{OLD}(C_n) < 4k+4$, by assumption. Thus, we have
$$\gamma_{OLD}(C'_{n-6}) \leq |S'| = |S| - |S \cap \{v_{n-5},v_{n-4}, \ldots, v_n\}| <4k + 4 - 4 = 4k,$$
a contradiction to the Inequality (\ref{eq_old}). This proves the claim and establishes the lower bound on $\gamma_{OLD}(C_n)$. \qed
\end{proof}

The theorem is, therefore, proved by providing an $OLD$-set $S$ of $C_n$ of the exact same cardinality as the lower bound, that is,
\begin{eqnarray} \label{eq_thm_old}
|S| = \left\{
\begin{array}{rll} 
  &\left \lceil\frac{2n}{3} \right\rceil, &\text{for odd } n, \vspace{2pt} \\ 
  2 &\left \lceil\frac{n}{3} \right\rceil, &\text{for even } n.
\end{array}
\right.
\end{eqnarray}
Let = $V(C_n) = \{v_1, v_2, \ldots , v_n\}$ and that $n = 6k+r$, where $r \in \{0,1,2,3,4,5\}$. For $k=0$, that is, $C_n$ being either a $3$-cycle or a $5$-cycle, it can be checked that the sets $\{v_1,v_2\}$ and $\{v_1, v_2, v_3, v_4\}$ are the respective $OLD$-sets. Thus, the result holds in this case. For the rest of this proof, therefore, we assume that $n \geq 6$, that is, $k \geq 1$. We now construct a vertex subset $S$ of $C_n$ by including in $S$ the vertices
\begin{enumerate}
\item $v_{6i-4}, v_{6i-3}, v_{6i-1}, v_{6i}$ for all $i \in \{1,2, \ldots , k\}$ if $r=0,3$. In this case, we have $|S| = 4k$ for $r=0$ and $|S| = 4k+2$ for $r=3$.
\item $v_{6i-4}, v_{6i-3}, v_{6i-2}, v_{6i-1}$ for all $i \in \{1,2, \ldots , k\}$ if $r \neq 0$; with
\begin{enumerate}
\item the vertices $v_{6k}, v_{6k+1}, \ldots , v_{6k+r-1}$ if $r=1,2,4$. In this case, we have $|S| = 4k+r$; and
\item the vertices $v_{6k+1}, v_{6k+2}, v_{6k+3}, v_{6k+4}$ if $r=5$. In this case, we have $|S| = 4k+4$.
\end{enumerate}
\end{enumerate}

It can be checked that the constructed set $S$ is, indeed, an $OLD$-set of $C_n$ and of the cardinality as in Equation~(\ref{eq_thm_old}). This proves the result. \qed
\end{proof}
%
Combining Theorem \ref{thm_general-lb} with results on $\gamma_{OLD}(P_n)$ in~\cite{ss} and on $\gamma_{OLD}(C_n)$ in Theorem~\ref{thm_old_cycles}, we deduce:

\begin{corollary}\label{cor_lb_OLD}
Consider $P_n$ with $n =6k+r$ for $k \geq 1$ and $r \in \{0, \ldots,5\}$, then we have:
$$
\gamma_{OLD}(M(P_n)) \geq \left\{
\begin{array}{lcl}
  4k+r+1 & \text{if} & r \in \{0,\ldots,4\}, \\
  4k+5   & \text{if} & r=5; \\
\end{array}
\right.
$$
and for $n \geq 3$ and $n \neq 4$, we have
$$\gamma_{OLD}(M(C_n)) \geq \left\{
\begin{array}{rll} 
  &\left \lceil\frac{2n}{3} \right\rceil +1, &\text{for odd } n, \vspace{2pt} \\ 
  2 &\left \lceil\frac{n}{3} \right\rceil +1, &\text{for even } n.
\end{array}
\right.$$
%
\end{corollary}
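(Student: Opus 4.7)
The plan is to obtain both families of lower bounds as direct consequences of Theorem~\ref{thm_general-lb}, applied to the known exact values of $\gamma_{OLD}(P_n)$ and $\gamma_{OLD}(C_n)$. Since Theorem~\ref{thm_general-lb} guarantees $\gamma_{OLD}(M(G)) \geq \gamma_{OLD}(G) + 1$ whenever $G$ is a path or a cycle admitting an $OLD$-set, it suffices to substitute the relevant closed-form expressions for $\gamma_{OLD}(G)$ on the right-hand side. A preliminary sanity check is that $M(P_n)$ and $M(C_n)$ do indeed admit $OLD$-sets in the relevant ranges: by the observation opening Section~\ref{sec_lower-bounds}, $M(G)$ contains false twins exactly when $G$ does, and $P_n$, $C_n$ are twin-free for the values of $n$ considered (in particular $n \neq 4$ is required for cycles, as already noted).

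For the cycle case, the required values of $\gamma_{OLD}(C_n)$ are furnished directly by Theorem~\ref{thm_old_cycles}: for odd $n$ one has $\gamma_{OLD}(C_n) = \lceil 2n/3 \rceil$, and for even $n \neq 4$ one has $\gamma_{OLD}(C_n) = 2\lceil n/3 \rceil$. Adding $1$ to each expression via Theorem~\ref{thm_general-lb} yields precisely the bound stated in the corollary.

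For the path case, I would invoke the result of Seo and Slater~\cite{ss}, which provides $\gamma_{OLD}(P_n)$ in closed form as a function of $n \bmod 6$. Writing $n = 6k+r$ with $k \geq 1$, their formula gives $\gamma_{OLD}(P_n) = 4k + r$ for $r \in \{0, 1, 2, 3, 4\}$ and $\gamma_{OLD}(P_{6k+5}) = 4k + 4$. Plugging each of these six values into the inequality $\gamma_{OLD}(M(P_n)) \geq \gamma_{OLD}(P_n) + 1$ from Theorem~\ref{thm_general-lb} produces exactly the two displayed cases, the split between them being driven entirely by the drop from the naive value $4k+5$ to $4k+4$ at $r = 5$.

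Since each step is a one-line substitution once the ingredients are in place, there is no genuine obstacle to overcome. The only minor subtlety worth flagging is that the exceptional form of the bound at $r=5$ is not an artifact of the estimation but reflects a genuine plateau of $\gamma_{OLD}(P_n)$ between $n = 6k+4$ and $n = 6k+5$ in the Seo--Slater formula; care should be taken to cite this case distinction explicitly.
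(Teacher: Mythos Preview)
Your proposal is correct and follows exactly the paper's approach: the corollary is stated as an immediate consequence of combining Theorem~\ref{thm_general-lb} with the Seo--Slater values for $\gamma_{OLD}(P_n)$ from~\cite{ss} and with Theorem~\ref{thm_old_cycles} for $\gamma_{OLD}(C_n)$, which is precisely the substitution you describe.
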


Theorem \ref{thm_general-lb} together with the results 
from \cite{hhh} on $\gamma_{LTD}(P_n)$ and 
from \cite{LTD1} on $\gamma_{LTD}(C_n)$ imply:

\begin{corollary}\label{cor_lb_LTD}
If $G$ equals $P_n$ or $C_n$ for $n \geq 3$, we have as lower bound:
$$
\gamma_{LTD}(M(G)) \geq \left\lfloor \frac{n}{2}\right\rfloor - \left\lfloor \frac{n}{4}\right\rfloor + \left\lceil\frac{n}{4} \right\rceil +1.
$$
\end{corollary}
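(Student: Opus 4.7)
The plan is to obtain this corollary as an immediate consequence of Theorem~\ref{thm_general-lb}, which supplies the structural inequality $\gamma_{LTD}(M(G)) \geq \gamma_{LTD}(G) + 1$ whenever $G = P_n$ or $G = C_n$. Since every graph without isolated vertices admits an $LTD$-set, the hypothesis of Theorem~\ref{thm_general-lb} is automatically met for both $P_n$ and $C_n$. What remains is to substitute the exact value of $\gamma_{LTD}(G)$, using the closed-form expression for $\gamma_{LTD}(P_n)$ established in~\cite{hhh} and for $\gamma_{LTD}(C_n)$ established in~\cite{LTD1}, and then to verify that both values coincide with the unified expression $\lfloor n/2 \rfloor - \lfloor n/4 \rfloor + \lceil n/4 \rceil$ appearing in the statement.

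To carry out the identification, I would argue by cases on the residue of $n$ modulo $4$. A direct calculation yields
\[
\left\lfloor \tfrac{n}{2} \right\rfloor - \left\lfloor \tfrac{n}{4} \right\rfloor + \left\lceil \tfrac{n}{4} \right\rceil \;=\; \begin{cases} 2k, & n = 4k,\\ 2k+1, & n = 4k+1,\\ 2k+2, & n = 4k+2,\\ 2k+2, & n = 4k+3,\end{cases}
\]
and a short cross-check against the piecewise formulas of~\cite{hhh,LTD1} shows that this agrees with $\gamma_{LTD}(P_n)$ and with $\gamma_{LTD}(C_n)$ in every case. Adding $1$ then produces the announced lower bound for $\gamma_{LTD}(M(G))$, completing the deduction.

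The main obstacle is purely bookkeeping: one must confirm that the residue-by-residue formulas in the two cited references collapse into the single floor/ceiling expression in the statement, and one should spot-check the small-$n$ corner cases (in particular $n \in \{3,4,5\}$) where general formulas are most prone to deviate. Fortunately, $\gamma_{LTD}(P_n)$ and $\gamma_{LTD}(C_n)$ happen to share the same value here, so the two cases collapse into a single uniform bound; had they differed (as happens for $OLD$-numbers in Corollary~\ref{cor_lb_OLD}), the statement would have had to be split into two parts. Beyond this verification, no genuine combinatorial argument is required, since Theorem~\ref{thm_general-lb} has already done the heavy lifting.
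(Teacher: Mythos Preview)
Your proposal is correct and follows exactly the route the paper takes: the corollary is stated there without proof, immediately after the sentence ``Theorem~\ref{thm_general-lb} together with the results from \cite{hhh} on $\gamma_{LTD}(P_n)$ and from \cite{LTD1} on $\gamma_{LTD}(C_n)$ imply,'' so the intended argument is precisely the substitution you describe. Your residue-by-residue verification that the cited closed forms collapse into the single expression $\lfloor n/2\rfloor-\lfloor n/4\rfloor+\lceil n/4\rceil$ simply makes explicit what the paper leaves implicit.
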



\section{Upper bounds on $X$-numbers of graphs $M(G)$}
\label{sec_upper-bounds}

We first establish a general upper bound on $X$-numbers of graphs $M(G)$ in terms of $\gamma_{X}(G)$.

\begin{theorem}\label{wow}
Let $X\in \{
LD,LTD,OLD\}$. For a graph $G$ admitting an $X$-set, we have
\[
\gamma_{X}(M(G))\leq 2\gamma_{X}(G).
\]
\end{theorem}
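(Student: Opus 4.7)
The plan is to lift a minimum $X$-set $C$ of $G$ to $M(G)$ by doubling it on the Mycielski copy. Concretely, I would set $C' = C \cup \{u_i : v_i \in C\}$, which has size $2|C| = 2\gamma_X(G)$, and then verify that $C'$ is an $X$-set of $M(G)$ for each of the three choices of $X$ simultaneously, since the arguments differ only in which separation condition is invoked.

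The key combinatorial observation driving the verification is an explicit description of the traces on $C'$. Noting that $u \notin C'$, for every $v_i \in V$ one has $N_{M(G)}(v_i) \cap C' = (N_G(v_i) \cap C) \cup \{u_t : v_t \in N_G(v_i) \cap C\}$; for every $u_i \in U$ one has $N_{M(G)}(u_i) \cap C' = N_G(v_i) \cap C$; and $N_{M(G)}(u) \cap C' = \{u_t : v_t \in C\}$. Thus the $C'$-trace of $v_i$ is the $G$-trace of $v_i$ together with its $U$-duplicate, the $C'$-trace of $u_i$ is simply the $G$-trace of $v_i$ and lies entirely in $V$, and the $C'$-trace of $u$ lies entirely in $U$.

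From this, (total-)domination follows at once from the corresponding property of $C$ in $G$: every $v_j \notin C$ still has a $C$-neighbor in $M(G)$, the vertex $u_j$ has the same $V$-neighbors as $v_j$, and $u$ is dominated by any $u_t$ with $v_t \in C$. For the separation part, a pair $v_i, v_j$ (respectively $u_i, u_j$) has equal $C'$-traces in $M(G)$ if and only if $N_G(v_i) \cap C = N_G(v_j) \cap C$, so separation within each layer is inherited from $G$. For mixed pairs $v_i, u_j$, the $v_i$-trace contains $U$-elements while the $u_j$-trace sits inside $V$; the two traces therefore differ as soon as $N_G(v_i) \cap C$ is nonempty, which is guaranteed whenever $v_i$ is dominated by $C$ but does not lie in $C$ (for $LD$) or whenever $C$ is total-dominating (for $LTD$ and $OLD$). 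Pairs involving $u$ are disposed of by the same $V$-versus-$U$ dichotomy.

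The step I expect to be the most delicate is the $OLD$-case, since separation must then be checked for \emph{all} pairs, including pairs already inside $C'$. The asymmetry built into the construction, where $u_i$ is added whenever $v_i \in C$ but never the reverse, is precisely what lets the comparisons $v_i$ versus $u_i$ and $v_i$ versus $u_j$ go through: the $v$-trace strictly contains the corresponding $u$-trace by the extra $U$-copies. This is also where the total-domination hypothesis on $C$ is essential, since it ensures $N_G(v_i) \cap C \neq \emptyset$ even for $v_i \in C$, which is what prevents the $v_i$- and $u_i$-traces from collapsing to the same empty set.
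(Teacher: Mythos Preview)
Your proposal is correct and follows essentially the same approach as the paper: both take the doubled set $C' = C \cup \{u_i : v_i \in C\}$ and verify that it is an $X$-set of $M(G)$ via the explicit trace computations $N_{M(G)}(v_j)\cap C' = \{v_k,u_k : v_k\in N_G(v_j)\cap C\}$, $N_{M(G)}(u_j)\cap C' = N_G(v_j)\cap C$, and $N_{M(G)}(u)\cap C' = \{u_t : v_t\in C\}$. Your write-up in fact supplies more of the case analysis (layered pairs, mixed pairs, and the role of total domination in the $OLD$ case) than the paper's sketch does.
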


\begin{proof}[sketch]
Let $C$ be a minimum $X$-set of $G$. Then, we construct a new set $C' = C \cup \{u_i : v_i \in C\}$. It can be checked that if $C$ is a dominating (respectively, total-dominating) set of $G$, then so is it of $M(G)$. For any $x \in V(M(G))$, let $N_M(x)$ (respectively, $N_M[x]$) denote the neighborhood (respectively, closed neighborhood) of $x$ in $M(G)$. If $C$ is a total-separating set of $G$, then for any $x \in V(M(G))$, we have
\begin{itemize}
\item $C' \cap N_M(u)=\{u_i:v_i\in C\}$

\item $C' \cap N_M(u_j)=(C\cup \{u_i:v_i\in C\})\cap N_M(u_j)=C\cap N(v_j)$

\item $C' \cap N_M(v_j)=(C \cup \{u_i:v_i\in C\}) \cap N_M(v_i)= \{v_k,u_k: v_k\in N(v_j)\cap C\}$
\end{itemize}

As is evident, the set $C' \cap N_M(x)$ is unique for each $x \in V(M(G))$. Thus, $C'$ is also a total-separating set of $M(G)$. Moreover, $|C'| = 2 |C|$. This proves the result. \qed
\end{proof}

Based on results from \cite{ABW_2015,GM_2007,LTD1} on $X$-numbers of stars and the relation (\ref{eq1}), 
we can show that the bound given in Theorem \ref{wow} is tight for stars (see Fig. \ref{cotadoble} for illustration):

\begin{theorem}\label{thm_X-stars}
For stars $K_{1,n}$ with $n \geq 3$, we have $\gamma_{X}(K_{1,n})=n$ and $$\gamma_{X}(M(K_{1,n}))=2n$$ whenever $X\in\{
LD,LTD\}$.
\end{theorem}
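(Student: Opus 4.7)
The plan is to combine three ingredients: the values of $\gamma_X(K_{1,n})$ for stars (which follow from the cited results in~\cite{ABW_2015,GM_2007,LTD1} together with relation~(\ref{eq1})), the general upper bound of Theorem~\ref{wow}, and a short case analysis for a matching lower bound on $\gamma_X(M(K_{1,n}))$. For the star itself, the $n$ leaves are pairwise false twins sharing the center $v_0$ as their only neighbor, so any $LD$- or $LTD$-set must include all but at most one of them; dominating the remaining leaf forces $v_0$ into the set, giving $\gamma_X(K_{1,n}) = n$ for $X \in \{LD,LTD\}$. Plugging this into Theorem~\ref{wow} yields $\gamma_X(M(K_{1,n})) \leq 2n$, and thanks to relation~(\ref{eq1}) it then suffices to prove the lower bound for $X=LD$, namely $\gamma_{LD}(M(K_{1,n})) \geq 2n$; the value of $\gamma_{LTD}(M(K_{1,n}))$ will be sandwiched between the two.

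The key structural observation is that $M(K_{1,n})$ contains two false-twin classes of size $n$: the leaves $v_1,\dots,v_n$ share the open neighborhood $\{v_0, u_0\}$, and the twin vertices $u_1,\dots,u_n$ share the open neighborhood $\{v_0, u\}$. Hence any $LD$-set $C$ must contain at least $n-1$ vertices from each class, immediately yielding the weak bound $|C| \geq 2n-2$.

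To lift this to $|C| \geq 2n$, I will assume, after renaming if necessary, that only $v_n$ and/or $u_n$ are possibly absent from $C$, and then count the elements of $C$ among the three ``central'' vertices $v_0, u_0, u$. In the sub-cases where at most one of $v_n, u_n$ is missing, a single domination constraint directly supplies the needed extra central vertex. The main obstacle is the case where both $v_n$ and $u_n$ lie outside $C$: here I must combine three conditions, namely that $v_n$ is dominated ($v_0 \in C$ or $u_0 \in C$), that $u_n$ is dominated ($v_0 \in C$ or $u \in C$), and that the pair $v_n, u_n$ is separated, that is, $\{v_0, u_0\} \cap C \neq \{v_0, u\} \cap C$, which forces at least one of $u_0, u$ into $C$. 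A quick summation of the three resulting indicator inequalities shows that at least two of $v_0, u_0, u$ must lie in $C$, compensating precisely for the two missing leaves and completing the count $|C| \geq 2n$.
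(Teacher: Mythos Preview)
Your argument is correct. The upper bound via Theorem~\ref{wow} and the reduction to $X=LD$ via relation~(\ref{eq1}) match the paper's stated strategy. For the lower bound $\gamma_{LD}(M(K_{1,n}))\geq 2n$, the paper itself does not spell out a proof---it only points to the cited references, relation~(\ref{eq1}), and Figure~\ref{cotadoble}---so your explicit twin-class count plus the short case analysis on $\{v_0,u_0,u\}$ actually supplies more detail than the paper does. One small remark on the renaming step: it works because the leaves $v_1,\dots,v_n$ can be permuted among themselves independently of the permutation on $u_1,\dots,u_n$ (both families are false-twin classes in $M(K_{1,n})$), so the missing $v$-leaf and the missing $u$-leaf may indeed both be placed at index~$n$ even if they originally carried different indices. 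With that understood, the indicator-sum argument $2(a+b+c)\geq 3$ cleanly gives the two extra central vertices needed in the hardest sub-case, and the proof is complete.
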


\begin{figure}[!t]
\begin{center}
\includegraphics[scale=0.9]{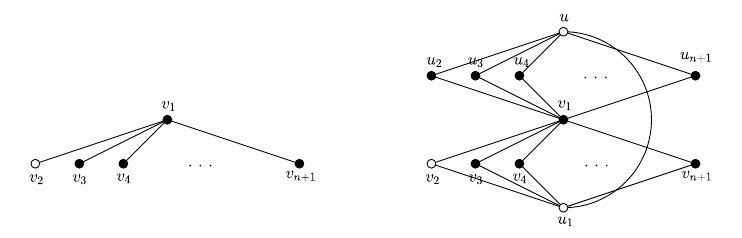}
\caption{$K_{1,n}$ and $M(K_{1,n})$ (black vertices form a minimum $X$-set when $X\in\{
LD,LTD\}$)}
\label{cotadoble}
\end{center}
\end{figure}

Note that stars $K_{1,n}$ have false twins and, therefore, so does $M(K_{1,n})$. Hence, $M(K_{1,n})$ does not admit an $OLD$-set. Fig. \ref{OLDcotadoble} provides an example for $\gamma_{X}(M(G)) = 2\gamma_{X}(G)$ when $X=OLD$. For $OLD$-sets, we can further prove the following.

\begin{figure}[!b]
\begin{center}
\includegraphics[scale=0.9]{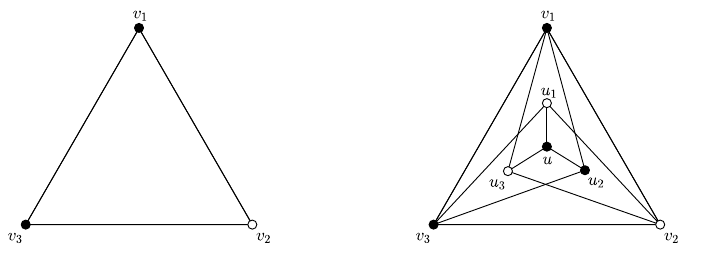}
\caption{$C_3$ and $M(C_3)$ (black vertices form a minimum $OLD$-set)}
\label{OLDcotadoble}
\end{center}
\end{figure}

\begin{theorem}\label{M(OLD)}
Let $G$ be a graph without isolated vertices and false twins. Then $\gamma_{OLD}(M(G))\leq \gamma_{OLD}(G)+2$.
\end{theorem}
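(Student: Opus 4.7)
The plan is to augment a minimum $OLD$-set $C$ of $G$ with exactly two vertices of $M(G)$ that lie outside $V$: the apex vertex $u$ and any single shadow vertex, say $u_1$. The candidate set is $C' = C \cup \{u,u_1\}$, whose size equals $\gamma_{OLD}(G)+2$ since $C \subseteq V$ is disjoint from $\{u,u_1\}$. The hypothesis that $G$ has no isolated vertices and no false twins ensures that $C$ exists, and by the opening observations of Section~\ref{sec_lower-bounds} together with a direct check on pairs of the form $\{v_i,u_j\}$, $\{v_i,u\}$, $\{u_i,u\}$, the graph $M(G)$ is itself twin-free so that it indeed admits an $OLD$-set.

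First I would verify that $C'$ is a total-dominating set of $M(G)$. Every $v_k \in V$ is total-dominated because $C$ already total-dominates $G$; every $u_k \in U$ is adjacent to the apex $u \in C'$; and the apex $u$ itself is dominated by $u_1 \in C' \cap U$. Thus the sole purpose of including $u_1$ is to cover the apex.

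Next I would check total-separation by cases on the type of pair in $V \cup U \cup \{u\}$. For pairs $\{v_k,v_l\}$, $\{u_k,u_l\}$ with $k\neq l$, and mixed pairs $\{v_k,u_l\}$ with $k\neq l$, the $V$-components of the neighborhoods intersected with $C'$ reduce to $N_G(v_k)\cap C$ versus $N_G(v_l)\cap C$, which differ because $C$ is an $OLD$-set of $G$. For any pair involving the apex $u$, the separation follows since $N_M(u)\subseteq U$ while every $v_k$ and every $u_k$ has a nonempty $V$-component in its intersection with $C'$, again by total-domination of $C$.

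The delicate case, which I expect to be the main obstacle, is a twin-like pair $\{v_k,u_k\}$: both share the same $V$-neighborhood $N_G(v_k)$ in $M(G)$, so the $OLD$-property of $C$ inside $G$ provides no help at all. This is precisely where including the apex pays off: $u \in N_M(u_k) \cap C'$ while $u \notin N_M(v_k)$, which yields the required separation uniformly in $k$ and also explains why two extra vertices (rather than one) are needed in the construction. Combining all the cases, $C'$ is an $OLD$-set of $M(G)$, and the bound $\gamma_{OLD}(M(G)) \leq |C'| = \gamma_{OLD}(G)+2$ follows.
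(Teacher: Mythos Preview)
Your proposal is correct and follows essentially the same construction as the paper: take a minimum $OLD$-set $C$ of $G$ and adjoin the apex $u$ together with one shadow vertex $u_i$, then verify total-domination and total-separation by cases. Your case organization (splitting off the ``$V$-component'' to handle all pairs with distinct indices, and isolating the apex and the twin-like pairs $\{v_k,u_k\}$ separately) is slightly different from the paper's explicit listing of the intersections $C_i\cap N_M(\cdot)$, but the argument is the same in substance.
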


\begin{proof}[sketch]
Let $C\subset V$ be an $OLD$-set of $G$ and let $u_i\in   U$. We define $C_i=C\cup \{u,u_i\}$. As $G$ has no isolated vertices, every vertex in $V(M(G))$ is adjacent to a vertex in $C_i$. This implies that $C_i$ is a total-dominating set of $M(G)$.
Moreover, by the fact that $C$ is a total-separating set of $G$, it can be checked that each of the following sets is unique. 
$$\begin{array}{ll}
C_i\cap N_M(u)=\{u_i\}; & \\
C_i\cap N_M(v_j)=(C\cap N(v_j))\cup \{u_i\} & \text{ for } v_j\in N(u_i);\\
C_i\cap N_M(v_j)=C\cap N(v_j) & \text{ for } v_j\notin N(u_i); \text{ and}\\
C_i\cap N_M(u_j)=(C\cap N(v_j))\cup \{u\} & \text{ for } u_j\in U.
\end{array}$$
This proves that $C_i$ is an $OLD$-set of $M(G)$ and since, $|C_i| = |C|+2$, the result follows. \qed
\end{proof}

The bound given in Theorem \ref{M(OLD)} is tight, as $M(P_2)$ and $M(C_3)$ (in Figure \ref{OLDcotadoble}) show. In addition, it enables us to prove the following for $\gamma_{OLD}(M(P_n))$ and $\gamma_{OLD}(M(C_n))$.

\begin{theorem} \label{thm_old_paths&cycles}
For all $n \geq 2$ and $n \neq 3$, we have $$\gamma_{OLD}(M(P_n)) = \gamma_{OLD}(P_n)+2$$ 
and for all $n \geq 3$, we have
$$\gamma_{OLD}(C_n)+1 \leq \gamma_{OLD}(M(C_n)) \leq \gamma_{OLD}(C_n)+2.$$
\end{theorem}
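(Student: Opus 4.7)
The plan is to dispose of the easy parts first and then isolate the remaining obstruction. Both upper bounds are immediate from Theorem~\ref{M(OLD)}: for $n\geq 2$, $n\neq 3$ the path $P_n$, and for $n\geq 3$, $n\neq 4$ the cycle $C_n$, admit $OLD$-sets (no false twins and no isolated vertices), so the theorem applies and gives $\gamma_{OLD}(M(G))\leq \gamma_{OLD}(G)+2$. The cycle lower bound $\gamma_{OLD}(M(C_n))\geq \gamma_{OLD}(C_n)+1$ is exactly Theorem~\ref{thm_general-lb} for $G=C_n$, completing the cycle case and the upper bound for paths.

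The actual work lies in strengthening the generic lower bound by one in the path case, that is, proving $\gamma_{OLD}(M(P_n))\geq \gamma_{OLD}(P_n)+2$. My plan is to argue by contradiction: assume there exists an $OLD$-set $C_M$ of $M(P_n)$ with $|C_M|=\gamma_{OLD}(P_n)+1$, and set $C_V=C_M\cap V$, $C_U=C_M\cap U$, and $C=C_V\cup\{v_i:u_i\in C_U\}$. First, I would transcribe the descent argument from the proof of Theorem~\ref{thm_general-lb} (presented there for $LD$) to the $OLD$ setting to show that $C$ is necessarily an $OLD$-set of $P_n$, which gives $|C|\geq \gamma_{OLD}(P_n)$. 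With $T=\{i:v_i,u_i\in C_M\}$, a direct count yields
\[
|C_M|=|C|+|T|+[u\in C_M],
\]
so the equality $|C_M|=\gamma_{OLD}(P_n)+1$ forces one of three configurations: (i) $u\in C_M$ with $|T|=0$ and $C$ minimum; (ii) $u\notin C_M$ with $|T|=1$ and $C$ minimum; (iii) $u\notin C_M$ with $|T|=0$ and $|C|=\gamma_{OLD}(P_n)+1$.

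For each subcase I would combine two constraints to reach a contradiction. The first is the rigidity of minimum $OLD$-sets of $P_n$ from~\cite{ss}, whose structure is essentially periodic with prescribed adjustments at the two endpoints. The second is the strong separation load placed on $C_U$ by $u$ in $M(P_n)$: when $u\notin C_M$, total-separating $u$ from each $u_j$ (respectively, each $v_j$) forces $C_U\neq C_V\cap N_G(v_j)$ (respectively, $C_U\neq (C_V\cap N_G(v_j))\cup\{u_k\in C_U:v_k\in N_G(v_j)\}$) for every $j$. Simultaneously, the endpoints $v_1,u_1,v_n,u_n$ each have only two neighbors in $M(P_n)$, which pins down $C_M\cap\{v_1,u_1,v_2,u_2\}$ (and symmetrically at the other end) to a very small list of admissible patterns; I would enumerate these and show that none extends to a valid $OLD$-set of $M(P_n)$ of size $\gamma_{OLD}(P_n)+1$. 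I expect the main obstacle to be subcase (iii), where one extra vertex must simultaneously repair the domination of $u$ and its separation from every $u_j$ and every $v_j$ without disturbing the tight periodic pattern of a near-minimum $OLD$-set of $P_n$; a careful interplay between the endpoint case analysis and these global profile constraints should deliver the required contradiction.
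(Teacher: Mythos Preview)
Your handling of the cycle inequalities and of the path upper bound agrees with the paper. For the key inequality $\gamma_{OLD}(M(P_n))\ge\gamma_{OLD}(P_n)+2$, however, the paper does \emph{not} descend to $P_n$ at all: after checking small $n$ directly it writes $n=6k+r$, partitions $V(M(P_n))\setminus\{u\}$ into consecutive blocks $B_i=\{v_j,u_j:6i-5\le j\le 6i\}$ (plus a residual block when $r\neq0$), argues by a local analysis that any $OLD$-set of $M(P_n)$ meets every full block in at least four vertices, and combines this with a separate argument that $|C\cap U|\ge2$ to reach $4k+2$ when $r=0$; the remaining residues $r\in\{1,\dots,5\}$ are then handled one by one. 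There is no case split on whether $u\in C_M$ and no parameter $|T|$.

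Your projection route can be made to work, but as written it has a gap. The constraint you record for ``$u$ versus $u_j$'' is vacuous: $C_U\subseteq U$ and $C_V\cap N_G(v_j)\subseteq V$ live in disjoint parts of $V(M(P_n))$, so they are automatically distinct once $C_U\neq\emptyset$, and this cannot drive the endpoint enumeration you announce. The constraint with real force, which you do not list, is the pairwise separation of the $u_i$'s: it forces $C_V\cap N_G(v_i)\neq C_V\cap N_G(v_j)$ for all $i\neq j$, so that when $u\notin C_M$ the set $C_V$ alone is already an $OLD$-set of $P_n$. This immediately collapses your subcases~(ii) and~(iii) to $|C_V|=\gamma_{OLD}(P_n)$ and $C_U=\{u_\ell\}$, whereupon every index $i\notin\{\ell-1,\ell+1\}$ satisfies $N_M(v_i)\cap C_M=N_M(u_i)\cap C_M$, an immediate contradiction for $n\ge4$. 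Without this observation your subcase~(iii) has no mechanism, since the ``rigidity of minimum $OLD$-sets from~\cite{ss}'' you invoke does not apply when $|C|=\gamma_{OLD}(P_n)+1$. Subcase~(i) then also falls to the same total-separation constraint on $C_V$ (now with $|C_V|\le\gamma_{OLD}(P_n)-1$), but it needs its own short argument rather than the one you sketch.
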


\begin{proof}[sketch]
The result for cycles follows directly from Theorems \ref{thm_general-lb} and \ref{M(OLD)}. For paths, again using Theorem~\ref{M(OLD)}, we only need to show that $\gamma_{OLD}(P_n) \geq \gamma_{OLD}(P_n)+2$ for all $n \geq 2$ and $n \neq 3$. As far as small paths a concerned, it can be checked that $\gamma_{OLD}(P_2)=2$, $\gamma_{OLD}(P_4) = \gamma_{OLD}(P_5) = 4$; and $\gamma_{OLD}(M(P_2)) = 4$, $\gamma_{OLD}(M(P_4)) = \gamma_{OLD}(M(P_5))=6$. Thus, the result holds for these small paths. Therefore, we assume that $n = 6k+r$ with $k \geq 1$, where $r \in \{0, 1, \ldots , 5\}$. If $V(P) = \{v_1, v_2, \ldots , v_n\}$, the proof follows by partitioning the vertex set of $M(P_n)$ into $\lceil \frac{n}{6} \rceil$ parts, the first $\lfloor \frac{n}{6} \rfloor$ of which are given by $B_i = \{v_j, u_j : 6i-5 \leq j \leq 6i \}$ for all $1 \leq i \leq \lfloor \frac{n}{6} \rfloor$; and the last (if exists, that is, if $r \neq 0$) part $B_l = \{v_j, u_j : 6k+1 \leq j \leq r \}$. Further analysis of any block $B_i$ for $1 \leq i \leq \lfloor \frac{n}{6} \rfloor$ shows that any $OLD$-set $C$ of $M(P_n)$ must contain at least $4$ vertices from $B_i$. Moreover, we would have $|C \cap U| \geq 2$. This gives the total count for the cardinality of $C$ to be $4k+2$ and thus proves the theorem for $r=0$. Moreover, each other case for $r \in \{1,2,3,4,5\}$ is dealt with separately where it can be shown that, for $r \in \{1,2,3,4\}$, exactly $r$ vertices and, for $r=5$, exactly $4$ vertices need to be included in $C$. This proves the theorem by comparison to the results for $\gamma_{OLD}(P_n)$ in~\cite{ss}. \qed
\end{proof}

Concerning $LD$-numbers, we note that $\gamma_{LD}(M(P_2))=2$ and $\gamma_{LD}(M(P_3))=\gamma_{LD}(M(P_4))=\gamma_{LD}(M(P_5))=4$ holds. 
We can improve the general upper bounds for $\gamma_{LD}(M(P_n))$ and $\gamma_{LD}(M(C_n))$ as follows:

\begin{theorem}\label{thm_LD_paths&cycles}
Consider $P_n$ with $n =3k+r$ for $k \geq 2$, $r \in \{0,1,2\}$ and $C_n$ with $n\geq 3$, then we have:
$$\gamma_{LD}(M(P_n)) \leq \left\{
\begin{array}{rcl}
	2k+1 & \text{if} & r=0 \\
  2k+2 & \text{if} & r \in \{1,2\} \\
\end{array}
\right.$$
and
$$\gamma_{LD}(M(C_n)) \leq \left\{
\begin{array}{lcl}
	n-\left\lfloor \frac{n}{3}\right\rfloor +1 & \text{if} & n \text{ is odd }  \\
  n-2\left\lfloor \frac{n}{6}\right\rfloor +1  & \text{if} &  n \text{ is even} \\
\end{array}
\right.$$
\end{theorem}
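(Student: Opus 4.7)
The plan is to prove each upper bound by exhibiting an explicit $LD$-set of $M(P_n)$ (resp.\ $M(C_n)$) of the stated cardinality and verifying the separation property by directly computing $N(x)\cap C$ for every $x\notin C$.

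For $M(C_n)$, the core construction is the ``$110$'' pattern on the $V$-vertices: when $n=3m$, I take $C=\{v_{3i-2},v_{3i-1}:1\leq i\leq m\}\cup\{u\}$, of size $2m+1$, which matches both the odd and the even closed form. The non-$C$ vertices then split into three signature types: each $v_{3i}$ sees the pair $\{v_{3i-1},v_{3i+1}\}$ (without the apex $u$), each $u_{3i}$ sees the triple $\{v_{3i-1},v_{3i+1},u\}$, and every remaining $u_j$ sees exactly one $V$-vertex together with $u$. Pairwise distinctness is immediate: the $V$-signatures omit $u$; the triples involve distinct $V$-pairs; the doubles involve distinct single $V$-vertices. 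For $n\not\equiv 0\pmod 3$, I append one or two ``leftover'' $v$-vertices at the seam of the tiling, and when $n\equiv 4\pmod 6$ an extra vertex is required to avoid a parity-induced collision, which recovers the bound $n-2\lfloor n/6\rfloor+1$ for even $n$.

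For $M(P_n)$ the construction mirrors the cyclic one but must be adjusted to handle the fact that $v_1,v_n$ have only a single neighbor in $P_n$, so $u_1,u_n$ carry short signatures that collide with certain interior $u_j$'s if the $110$ pattern is used verbatim. For $n=3k$ with $k\geq 2$, I would take $C$ to consist of the apex $u$ together with a locally adjusted ``$110$''-like block on $\{v_2,\ldots,v_{n-1}\}$, tuned to have exactly $2k+1$ vertices and to separate every pair $u_j,u_{j'}$ with $j\neq j'$; for small $k$ this reduces to a dense central block such as $\{v_2,v_3,\ldots,v_{3k-1}\}$ truncated to the right size. For $n=3k+r$ with $r\in\{1,2\}$, one additional endpoint vertex is inserted to reach the target bound $2k+2$. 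Each residue case is verified by enumerating the finitely many signature types and checking pairwise distinctness.

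The main technical obstacle is precisely this endpoint analysis for the path: whenever two vertices $u_j,u_{j'}$ with $j\neq j'$ satisfy $N_{P_n}(v_j)\cap C_V = N_{P_n}(v_{j'})\cap C_V$, their signatures in $M(P_n)$ coincide because both gain the apex $u$, so the construction must break every such collision by placing a few extra $v$-vertices near $v_1$ and $v_n$. Once these local patches are in place, the remaining signature-counting verification goes through exactly as in the cyclic case, and the sizes of the constructed sets can be checked against the claimed formulas by a direct count.
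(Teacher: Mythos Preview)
Your cycle construction is correct and follows the paper's method (build an explicit $LD$-set, compute signatures, check pairwise distinctness). The gap is in the path case, where you correctly spot the endpoint obstruction but never actually produce a set: ``a locally adjusted $110$-like block on $\{v_2,\ldots,v_{n-1}\}$, tuned to have exactly $2k+1$ vertices'' is not a construction. Concretely, if $C=\{u\}\cup C_V$ with $C_V\subseteq V$ as you propose, then separating all pairs $u_j,u_{j'}$ forces $N_{P_n}(v_j)\cap C_V$ to be pairwise distinct over \emph{all} $j\in\{1,\ldots,n\}$, i.e.\ $C_V$ must open-separate $P_n$. The verbatim $110$ pattern $\{v_1,v_2,v_4,v_5,\ldots,v_{3k-2},v_{3k-1}\}$ fails this at the right end, since $u_{3k}$ and $u_{3k-2}$ both receive the signature $\{v_{3k-1},u\}$, and you give no repair.

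The paper's fix is different from what you outline and is worth noting: instead of patching with more $V$-vertices, it places \emph{one $U$-vertex} in $C$. For $r=0$ the set is
\[
C=\{v_2\}\cup\{v_{3\ell+1},v_{3\ell+2}:1\le\ell\le k-1\}\cup\{u_{3k},\,u\},
\]
of size $2k+1$; for $r=1$ one replaces $u_{3k}$ by $\{v_{3k+1},u_{3k+1}\}$, and for $r=2$ by $\{v_{3k+1},v_{3k+2}\}$. Putting $u_{3k}$ (resp.\ $u_{3k+1}$) into $C$ removes it from the list of vertices that need to be located, which kills the endpoint collision at no extra cost and makes the signature check routine. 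Your $V$-only scheme may also be salvageable, but that would require exhibiting, for every $k$, a dominating open-separating subset of $V(P_{3k})$ of size exactly $2k$; this is a separate combinatorial claim that you have not established and that the paper avoids entirely via the extra $U$-vertex.
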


\begin{proof}[sketch]
We provide the proof sktech for paths to illustrate the proof technique. The proof for cycles follows with similar techniques. Let $n \geq 6$, $n =3k+r$ with $k \geq 2$ and $r \in \{0,1,2\}$. Then, according to three possible values of $r$, we define the following sets. 
\begin{itemize}
\item If $r=0$, we define $C=\{v_2\} \cup \{v_{i},v_{i+1} : i= 3 \ell +1, \ell \in \{1, \ldots, k-1\}\} \cup \{u_{3k},u\}$. In this case, we have $|C|=2k+1$.
\item If $r=1$, we define $C_1=(C - \{u_{3k}\}) \cup \{v_{3k+1},u_{3k+1}\}$. Here we have, $|C_1|=2k+2$
\item If $r=2$, we define $C_2=(C-\{u_{6k}\}) \cup \{v_{6k+1},v_{6k+2}\}$. In this case, we have $|C_2|=2k+2$
\end{itemize}
Further analysis of the above sets $C$, $C_1$ and $C_2$ shows that in each of the above three cases, the sets are $LD$-sets of $M(P_n)$. The result then follows by the cardinalities of the sets in the above three cases. \qed
\end{proof}

We observe that the upper bounds are tight for $M(P_n)$ with $6 \leq n \leq 8$ and for $M(C_n)$ with $n \in \{3,6,7\}$, but are not tight for $M(C_4)$ and $M(C_5)$, for example. There are no examples yet known where the upper bounds are not tight for $M(P_n)$.  

The next theorem provides an upper bound for the $LTD$-numbers of $M(P_n)$ and $M(C_n)$. However, before coming to it, as far as small graphs of these graph classes are concerned, we note that $\gamma_{LTD}(M(P_2))=3$, $\gamma_{LTD}(M(P_3))=4$ and $\gamma_{LTD}(M(P_4))$ $=\gamma_{LTD}(M(P_5))=5$. The next result improves the general upper bounds for $\gamma_{LTD}(M(P_n))$ and $\gamma_{LTD}(M(C_n))$ as follows.

\begin{theorem} \label{thm_LTD_paths&cycles}
Consider $P_n$ with $n =6k+r$ for $k \geq 1$, $r \in \{0, \ldots,5\}$ and $C_n$ with $n\geq 3$, then we have:
$$\gamma_{LTD}(M(P_n)) \leq \left\{
\begin{array}{lcl}
	4k+2 & \text{if} & r=0 \\
  4k+r+1 & \text{if} & r \in \{1,2,3\} \\
	4k+r & \text{if} & r \in \{4,5\} \\
\end{array}
\right.$$
and
$$\gamma_{LTD}(M(C_n)) \leq \left\{
\begin{array}{lcl}
	n-\left\lfloor \frac{n}{3}\right\rfloor +2 & \text{if} & n \text{ is odd }  \\
  n-2\left\lfloor \frac{n}{6}\right\rfloor +2  & \text{if} &  n \text{ is even} \\
\end{array}
\right.$$
\end{theorem}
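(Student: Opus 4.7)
The plan is to mirror the strategy of Theorem~\ref{thm_LD_paths&cycles} and Theorem~\ref{wow}: for each residue $r$ in the path case and for each parity in the cycle case, I would exhibit an explicit subset $C \subseteq V(M(G))$ of exactly the claimed cardinality, and then verify that $C$ is a total-dominating set of $M(G)$ and that it separates every pair of vertices outside $C$. No lower bound work is needed, since the statement is only an upper bound.

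The backbone of every construction I propose is a period-$6$ pattern on the base path or cycle: from each block of six consecutive base vertices take the four "paired-then-paired" ones, namely $\{v_{6i-5},v_{6i-4},v_{6i-2},v_{6i-1}\}$ for $i \in \{1,\dots,k\}$. These $4k$ vertices already total-dominate all $v_j$'s and all $u_j$'s inside a block (since every $u_j$ has exactly the same $v$-neighborhood as $v_j$ in $M(G)$), and on the base graph they already form a separating pattern between the $v$'s by the same reasoning used in Theorem~\ref{thm_LD_paths&cycles}. To handle the apex $u$ and to break any remaining $v_j$ versus $u_j$ ambiguities, I would always throw in the apex $u$ together with exactly one carefully chosen $u_\ell$ (so that $u$ is total-dominated and $u_\ell$ itself is covered by the base pattern). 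The signature "$u$ is a neighbor" then distinguishes every $u_j$ from its twin $v_j$, while the base pattern separates the vertices within the $V$-layer and within the $U$-layer.

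For paths, the six subcases are handled by adjusting only the trailing partial block. When $r=0$, it suffices to adjoin $\{u,u_{6k-2}\}$ to the $4k$ backbone vertices, giving $4k+2$. When $r\in\{1,2,3\}$, I would extend the backbone by $v_{6k+1},\dots,v_{6k+r}$ and adjoin $\{u,u_{6k+1}\}$, achieving $4k+r+1$. When $r\in\{4,5\}$, the tail is long enough that the extended backbone $v_{6k+1},v_{6k+2},v_{6k+4},v_{6k+5}$ (plus $v_{6k+5}$ itself when $r=5$) together with $u$ already covers and separates the tail $u_j$'s, so no extra $u_\ell$ is required and the count drops to $4k+r$. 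For cycles, the absence of end-vertices actually makes the wrap-around easier: the same period-$6$ backbone is applied cyclically, the apex $u$ and one $u_\ell$ are added, and the residue mod $6$ is absorbed into a period-$3$ tail when $n$ is odd (total $n-\lfloor n/3\rfloor+2$) and a period-$6$ tail when $n$ is even (total $n-2\lfloor n/6\rfloor+2$).

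The verification of total-domination is routine once one notes that $N_{M(G)}(u_j)\cap V = N_G(v_j)$, so every $u_j$ is totally dominated exactly when $v_j$ is; the apex $u$ is covered by the added $u_\ell$; and $u_\ell$ is covered by the base pattern which always contains a $v$-neighbor of $v_\ell$. For separation I would argue that, for any two distinct vertices $w,w' \notin C$, either $w,w'$ lie in different layers (in which case the coordinate "$u \in N(w)\cap C$?" distinguishes them, since $u \in C$ and $u$ is adjacent only to $U$), or they lie in the same layer (in which case the four-in-six base pattern together with $u_\ell$ already separates the corresponding $v$-indices, exactly as in the proof of Theorem~\ref{thm_LD_paths&cycles}). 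The main obstacle I anticipate is the boundary bookkeeping in the path case: ensuring that the few tail $u_{6k+j}$'s are simultaneously total-dominated and separated from $v_{6k+j}$ using only the single auxiliary $u_\ell$, which forces case-by-case choices of $\ell$ and, for the cycle case, that the wrap-around block does not produce two vertices with identical neighborhood signatures when $n \not\equiv 0 \pmod 6$.
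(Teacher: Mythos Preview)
Your overall strategy is not the one the paper uses. For paths, the paper does not build an $LTD$-set of $M(P_n)$ at all: it simply invokes the inequality $\gamma_{LTD}(M(P_n))\le\gamma_{OLD}(M(P_n))$, then Theorem~\ref{thm_old_paths&cycles} (which gives $\gamma_{OLD}(M(P_n))=\gamma_{OLD}(P_n)+2$), together with the known values of $\gamma_{OLD}(P_n)$ from~\cite{ss}. For cycles, the paper recasts the problem via a minimum vertex cover of an auxiliary graph $G'$ on $V(C_n)$ (with edge set $\{\{w_i,w_{i+1}\},\{w_i,w_{i+2}\}\}$) and shows that any such vertex cover together with $\{u,u_i\}$ is an $LTD$-set of $M(C_n)$; the bound then follows from a separately computed vertex-cover number. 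Your direct period-$6$ construction is closer in spirit to the proof of Theorem~\ref{thm_LD_paths&cycles} and is a perfectly reasonable alternative route, with the advantage of being self-contained.

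That said, there is a genuine gap in your path construction for $r\in\{4,5\}$. You propose a set consisting only of base vertices $v_j$ together with the apex $u$, explicitly asserting that ``no extra $u_\ell$ is required''. But in $M(P_n)$ the apex $u$ is adjacent only to vertices of $U$; hence if $C\cap U=\emptyset$ then $N(u)\cap C=\emptyset$ and $C$ is not even total-dominating. So the sets you describe for $r=4,5$ are not $LTD$-sets. The fix is easy in principle (for example, replace one tail vertex $v_{6k+j}$ by the corresponding $u_{6k+j}$, keeping the cardinality $4k+r$), but as written the argument fails, and after any such swap you must re-verify separation inside the $V$-layer for the pairs neighbouring the swapped index, since $u_{6k+j}$ no longer sees $u_{6k+j\pm 1}$ the way $v_{6k+j}$ did.
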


\begin{proof}[sketch]
The upper bound on the $LTD$-number of $M(P_n)$ follows by the fact that $\gamma_{LTD}(M(P_n)) \leq \gamma_{OLD} (M(P_n)) = \gamma_{OLD} (P_n) + 2$ (by Theorem \ref{thm_old_paths&cycles}) and by the known exact values of $\gamma_{OLD} (P_n)$ from \cite{ss}.

For the upper bound on the $LTD$-number of $M(C_n)$, we consider the following two graphs $G$ and $G'$. Let $G=(V,E)$ be the graph such that $V=\{v_1,v_2,\ldots,v_n\}$ and $E=\{ \{v_{i},v_{i+2}\}, \{v_{i},v_{i+4}\}, \{v_{i},v_{i+3}\} : i \in \{1,\ldots,n\}\}$ (where the sum of the indices is taken modulo $n$). Renaming the vertices of $V$ in such a way that $w_i=v_{1+2i}$ for $i \in \{0,\ldots,n-1\}$, we consider the second graph $G'$ with vertex set $\{w_i: i \in \{0, \ldots, n-1\}\}$ and edge set $\{\{w_i,w_{i+1}\},\{w_i,w_{i+2}\}: i \in \{0, \ldots, n-1\}\}$. We then look at the graph $G'$ with vertex set $V(C_n)$, and denote by $C_{G'}$ its minimum vertex cover. Then, it is easy to check that $C_{G'}\cup \{u_i,u\}$ for some $i\in\{1,\ldots,n\}$ is an $LTD$-set of $M(C_n)$. The theorem for $C_n$ therefore follows by the use of another result proven separately that the size of a minimum vertex cover of $G$ is 
 
\begin{itemize}
\item  $n-\left\lfloor \frac{n}{3}\right\rfloor$ when $n$ odd and not a multiple of $3$,
\item  $n-\left\lfloor \frac{n}{4}\right\rfloor$ when $n$ is odd and multiple of $3$,
\item $n-2\left\lfloor \frac{n}{6}\right\rfloor$ when $n$ is even.
\end{itemize} \qed
\end{proof}

We observe that for $\gamma_{LTD}(M(C_n))$, there are values of $n$ where the upper bound is attained (for example, $n \in \{6,9\}$), but also where this is not the case (for example, $n \in \{3,4,5,7,8\}$). 
There are no examples yet known where the upper bounds are not tight for $M(P_n)$.

\section{Concluding remarks}

To summarize, we studied three location-domination type problems under the Mycielski construction. In Section \ref{sec_lower-bounds}, we showed that $\gamma_{X}(G)+1$ is a general lower bound of $\gamma_{X}(M(G))$ for all paths and cycles and all $X\in \{LD,LTD,OLD\}$. Using results on $\gamma_{X}(P_n)$ (respectively, on $\gamma_{X}(C_n)$) from \cite{ABLW_2019,BCHL_2004,LTD4,hhh,ss}, this allowed us to deduce the appropriate lower bounds on the $X$-numbers of $M(P_n)$ (respectively, of $M(C_n)$) for $X\in \{LD,LTD,OLD\}$. As a related extension of one of the main focuses of this paper, namely, the $OLD$-numbers of $M(C_n)$, we also establish the exact $OLD$-numbers for cycles.

In Section \ref{sec_upper-bounds}, we firstly provided two general upper bounds on $X$-numbers of the graphs $M(G)$. We showed that the upper bound of $2\gamma_{X}(G)$ is attained when $G$ is a star for $X\in \{LD,LTD\}$. For $OLD$-numbers of $M(G)$, we could further establish the general upper bound of $\gamma_{OLD}(G)+2$. We showed that this bound is attained for $\gamma_{OLD}(M(P_n))$ and, combining our results on the lower and upper bound of the $OLD$-numbers, we obtained a Vizing-type result for $M(C_n)$, namely, $\gamma_{OLD}(C_n)+1 \leq \gamma_{OLD}(M(C_n)) \leq \gamma_{OLD}(C_n)+2$. For the other $X$-problems with $X\in \{
LD,LTD\}$, we could improve the general upper bounds for the $X$-numbers of both $M(P_n)$ and $M(C_n)$.  

For the studied $X$-numbers, there are examples where the upper bounds are attained (and, therefore, cannot be improved any further). On the other hand, there are also examples where the upper bounds are not tight. 
Therefore, our future research includes finding these exact values. In view of the fact that lower bounds were obtained by considering the domination aspect only, we expect that the true values are closer to the upper bounds. This applies particularly to $\gamma_{LD}(M(P_n))$ and to $\gamma_{LTD}(M(P_n))$ where no examples are yet known where the upper bound is not tight. 

Moreover, it would be interesting to study similar questions for other locating-dominating type problems, for example, differentiating total-dominating sets (defined as total-dominating sets that separate all vertices of the graph).

%
%
%
%

\end{document}